\newtheorem{Th}{Theorem}
\newtheorem{Lm}[Th]{Lemma}
\theoremstyle{definition}
\date{}
\title{Any nonsingular action of the full symmetric group is isomorphic  to an action with invariant measure}
\author{Nessonov ~N.~I. }
\begin{document}
\maketitle
\begin{abstract}
Let $\overline{\mathfrak{S}}_\infty$ denote the set of all bijections of natural numbers. Consider the action of $\overline{\mathfrak{S}}_\infty$  on a {\it measure space } $\left( X,\mathfrak{M},\mu \right)$, where $\mu$ is $\overline{\mathfrak{S}}_\infty$-{\it{quasi-invariant}} measure. We prove that there exists $\overline{\mathfrak{S}}_\infty$-invariant measure equivalent to $\mu$.
\end{abstract}
\subsection{Introduction}
Let $\mathbb{N}$ be the set of all natural numbers  and let $\overline{\mathfrak{S}}_\infty$ be the group of all bijections of $\mathbb{N}$. This group is called  {\it infinite full symmetric group}.
To the given element $s\in\overline{\mathfrak{S}}_\infty $ we put  ${\rm supp}\,s=\left\{ n\in \mathbb{N}: s(n)\neq n \right\}$. Element $s\in\overline{\mathfrak{S}}_\infty $ is called finite if $\# {\rm supp}\,s< \infty$.  The set of all finite elements form {\it infinite symmetric group}  $\mathfrak{S}_\infty$.

Let ${\rm Aut}\,\left( X,\mathfrak{M},\mu \right)$ be the set of all {\it nonsingular} automorphisms of the measure space $\left( X,\mathfrak{M},\mu \right)$. We would recall that automorphism $\left( X,\mu \right)\stackrel{T}{\mapsto} (X,\mu)$ is {\it nonsingular} if for each measurable $Y\in X$, $\mu(TY)=0$ if and only if $\mu(Y)=0$.   Throughout this paper we suppose that $\mathfrak{M}$ is {\it countable generated} $\sigma$-algebra of measurable subsets of $X$. A homomorphism $\alpha$ from a group $G$ into ${\rm Aut}\, \left( X,\mathfrak{M},\mu \right)$ is called an action of $G$  on $\left( X,\mathfrak{M},\mu \right)$. For convenience we consider  $\alpha$ as the right action of the group  $G$ on $X$: $X\ni x\stackrel{\alpha_g}{\mapsto} xg\in X $, $g\in G$. We suppose that
\begin{eqnarray*}
\mu\left( \left\{ x\in X: x(gh)\neq (xg)h \right\} \right)=0 \text{ for each fixed pair } g,h\in G \text{ and }
\end{eqnarray*}
 $Ag^{-1}\in \mathfrak{M}$ for all $A\in\mathfrak{M}$, $g\in G$.
 Introduce measure $\mu\circ g$ by
\begin{eqnarray*}
\mu\circ g(A)=\mu(Ag), A\in \mathfrak{M}.
\end{eqnarray*}

Suppose  that measures $\mu$  and $\mu\circ g$ are equivalent (i.e. mutually absolutely continuous) for every $g\in G$. In this case measure $\mu$ is called $G$-quasi-invariant. Considering  the whole  equivalence class of measures $\nu$, equivalent to $\mu$ (the measure class  $\mu$), it is also  the same to say that the action preserves the class as a whole, mapping any such measure to  another such. Let $\frac{{\rm d}\,\mu\circ g}{{\rm d}\,\mu}$ denote  the Radon-Nikodym density  of $\mu\circ g$ with respect to $\mu$. For convenience we put $\rho(g,x)=\sqrt{\frac{{\rm d}\,\mu\circ g}{{\rm d}\,\mu}}(x)$. Then
\begin{eqnarray}\label{Radon_Nikodym}
\int\limits_X (\rho(g,x))^2 f(xg)\,{\rm d}\,\mu=\int\limits_X  f(x)\,{\rm d}\,\mu \;\;\; \text{ for all } f\in L^1(X,\mu).
\end{eqnarray}
\begin{Th}\label{abelian_main_th}
Let the action of $\overline{\mathfrak{S}}_\infty$ on  $\left( X,\mathfrak{M},\mu \right)$ is measurable.
If measure $\mu$ is $\overline{\mathfrak{S}}_\infty$-quasi-invariant and $\sigma$-algebra $\mathfrak{M}$ is countably generated then there exists $\overline{\mathfrak{S}}_\infty$-invariant measure $\nu$ (finite or infinite) equivalent to $\mu$.
\end{Th}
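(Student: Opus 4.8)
The plan is to prove that the Radon–Nikodym cocycle of the action is a multiplicative coboundary; the invariant measure then follows at once. Write $\omega(g,x)=\rho(g,x)^2=\frac{{\rm d}\,\mu\circ g}{{\rm d}\,\mu}(x)$. Since $\mu\circ(gh)=(\mu\circ h)\circ g$, the chain rule gives the cocycle identity $\omega(gh,x)=\omega(g,x)\,\omega(h,xg)$ for $\mu$-a.e.\ $x$. If $\nu=\psi\,\mu$ with $\psi>0$ a.e., then a direct computation with \eqref{Radon_Nikodym} shows $\frac{{\rm d}\,\nu\circ g}{{\rm d}\,\nu}(x)=\omega(g,x)\,\psi(xg)/\psi(x)$, so $\nu$ is $\overline{\mathfrak{S}}_\infty$-invariant if and only if
\begin{equation*}
\omega(g,x)=\frac{\psi(x)}{\psi(xg)}\qquad\text{for every }g\in\overline{\mathfrak{S}}_\infty,\ \mu\text{-a.e. }x .
\end{equation*}
Thus everything reduces to producing one positive measurable $\psi$ trivializing $\omega$. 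Using that $\mathfrak{M}$ is countably generated, $(X,\mu)$ is a standard space and the ergodic decomposition is available, so I may and will assume the action is ergodic; and I first treat the case $\mu(X)=1$.

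For finite $\mu$, the identity \eqref{Radon_Nikodym} says precisely that $U_gf(x)=\rho(g,x)\,f(xg)$ defines a unitary (Koopman) representation of $\overline{\mathfrak{S}}_\infty$ on $L^2(X,\mu)$, and the reference vector $\mathbf 1$ satisfies $U_g\mathbf 1=\rho(g,\cdot)\ge 0$, with $\mu$ invariant under $g$ iff $U_g\mathbf 1=\mathbf 1$. I would therefore look for a nonzero vector $\eta$ fixed by the whole representation: then $\rho(g,x)\,\eta(xg)=\eta(x)$ forces $\psi:=|\eta|^2$ to satisfy $\omega(g,x)=\psi(x)/\psi(xg)$, so that $\nu=|\eta|^2\mu$ is a finite invariant measure; its null set is invariant, hence (by ergodicity) null, giving $\psi>0$ a.e.\ and $\nu\sim\mu$. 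To manufacture $\eta$ I would average $\mathbf 1$ over the group: the orbit $\{U_g\mathbf 1\}$ lies on the unit sphere and inside the positive cone $L^2_+$, so its closed convex hull $C$ is a bounded, weakly compact, $\overline{\mathfrak{S}}_\infty$-invariant convex set on which the group acts by weakly continuous affine maps, and any fixed point $\eta\in C$ is the desired invariant vector. The fixed point itself I would obtain either from topological amenability of $\overline{\mathfrak{S}}_\infty$ or, more concretely, by averaging along commuting blocks as in the next paragraph.

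The mechanism supplying a \emph{nonzero} fixed point, and that distinguishes $\overline{\mathfrak{S}}_\infty$ from $\mathbb{Z}$ (where type ${\rm III}$ occurs), is the rigidity of the full symmetric group. I would exploit its block structure: fix a partition $\mathbb{N}=\bigsqcup_{k\ge1}B_k$ into infinite sets, let $G_k=\overline{\mathfrak{S}}(B_k)$ be the pairwise commuting copies supported on the blocks, and recall that a product $g_1g_2\cdots$ of permutations with pairwise disjoint supports converges in $\overline{\mathfrak{S}}_\infty$. For a measurable nonsingular action of a Polish group the map $g\mapsto\rho(g,\cdot)$ is continuous into $L^2$, so the cocycle values are strongly summable along such infinite products; combined with the Bergman property and uncountable cofinality of $\overline{\mathfrak{S}}_\infty$ (and the Schreier–Ulam–Baer description of its normal subgroups) this is what I expect to force the orbit of $\mathbf 1$ to be relatively norm-compact and to keep $0\notin C$, so that the averaged vector $\eta$ does not vanish. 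The infinite-measure case, where the invariant $\nu$ may be forced to be infinite and no $L^2$ reference vector exists, I would handle by carrying out the same trivialization of $\omega$ in $L^0(X,\mu)$ (with $\psi$ merely measurable and positive), the requisite compactness again coming from the convergence of disjoint products.

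The step I expect to be the main obstacle is exactly this nonvanishing, equivalently the \emph{absence of escape of mass}: an abstract invariant-mean or fixed-point argument by itself yields only a finitely additive invariant functional, and the real content is to show that the closed convex hull $C$ avoids $0$ and that the limiting object is an honest integrable density rather than partly singular. All of the group-theoretic rigidity of $\overline{\mathfrak{S}}_\infty$—the convergence of infinite disjoint products together with its Bergman-type boundedness and uncountable cofinality—is spent here. Once $0\notin C$ (respectively the $L^0$-coboundary) is secured, the passage to the invariant measure $\nu=|\eta|^2\mu$ (respectively $\nu=\psi\,\mu$) and the verification that $\nu\sim\mu$ are routine.
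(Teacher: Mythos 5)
There is a genuine gap, and it is located exactly where you predicted it would be --- but it cannot be repaired by the tools you list, because the statement your construction needs is false. Your plan requires a nonzero vector $\eta\in L^2(X,\mu)$ fixed by the \emph{whole} group, equivalently $0\notin C$ for the closed convex hull $C$ of the orbit of $\mathbf 1$. Such a vector exists if and only if the measure class of $\mu$ contains a \emph{finite} invariant measure, and this can fail for ergodic actions. Concretely, take $X=\mathbb{N}$ with the natural (transitive, hence ergodic) action of $\overline{\mathfrak{S}}_\infty$ and $\mu(\{n\})=2^{-n}$: the action is quasi-invariant, the Koopman representation is continuous, but the only invariant measure equivalent to $\mu$ is counting measure, which is infinite; accordingly the Koopman representation has no nonzero invariant vector, and if $g_N$ swaps the blocks $[1,N]$ and $[N+1,2N]$ then $\mathcal{K}(g_N)\mathbf 1\to 0$ weakly while $\|\mathcal{K}(g_N)\mathbf 1\|=1$, so the orbit of $\mathbf 1$ is \emph{not} relatively norm-compact and $0\in C$ (indeed $0$ is the unique fixed point in $C$). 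No amount of group-theoretic rigidity (Bergman property, uncountable cofinality, Schreier--Ulam--Baer) can prevent this, since these are properties of the group alone and the counterexample uses the same group. Note also that your case split ``finite $\mu$ first'' does not isolate a good case: $\mu$ can always be normalized to a probability measure (quasi-invariance depends only on the measure class), so the real dichotomy is finiteness of the \emph{invariant} measure, which is precisely what is not known in advance. Your fallback for the infinite case --- ``the same trivialization in $L^0$'' --- is not an argument: $L^0$ is not locally convex, and no compactness or fixed-point principle is proposed there.

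The repair is to weaken the fixed-vector requirement, and this is what the paper does. Automatic continuity (Kechris--Rosendal: $\overline{\mathfrak{S}}_\infty$ has only two separable group topologies; countable generation of $\mathfrak{M}$ makes the unitary group of $L^2(X,\mu)$ separable) makes the Koopman representation continuous, and then the Lieberman--Olshanski structure theory --- reproved in the paper via the projections $P_n=\mathrm{w\text{-}lim}_m\,\mathcal{K}\left(\,^n\!\sigma_m\right)$ of (\ref{projection_fixed}) --- yields a nonzero vector $\xi$ fixed by an \emph{open subgroup} $\mathfrak{S}(n,\infty)$ only, which always exists. Then $|\xi|^2\mu$ is a finite $\mathfrak{S}(n,\infty)$-invariant measure on $E=\{\xi\neq 0\}$, and the depth lemma (Lemma \ref{Shift_lemma}) gives $\mu\left((Eg)\cap E\right)=0$ for $g\notin\mathfrak{S}(n,\infty)$, so the translates of $E$ by coset representatives of $\mathfrak{S}(n,\infty)\diagdown\overline{\mathfrak{S}}_\infty$ are a.e.\ disjoint and the measure extends by translation to a $\sigma$-finite invariant measure on the saturation of $E$; a further exhaustion over the invariant pieces $\,^i\!A_k$ covers all of $X$, with no ergodic decomposition needed. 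This inducing step is exactly the mechanism that lets the invariant measure come out infinite --- the case (illustrated by $X=\mathbb{N}$ above, where $E$ is a single atom and the induced measure is counting measure) that a whole-group fixed-vector argument can never reach.
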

\subsubsection{Outline of the proof of Theorem \ref{abelian_main_th}.}\label{outline_abelian}
Since the action $X\ni x\mapsto xg\in X$, $g\in\overline{\mathfrak{S}}_\infty$ preserves the measure class $\mu$, we can to define the Koopman representation of $\overline{\mathfrak{S}}_\infty$ associated to this action. It is given in the space $L^2(X,\mu)$ by the unitary operators
\begin{eqnarray*}
\left( \mathcal{K}(g)\eta \right)(x)=\rho(g,x)\eta(xg), \text{ where } \eta\in L^2(X,\mu).
\end{eqnarray*}
    From the separability of $\sigma$-algebra $\mathfrak{M}$ follows the separability of the unitary group of the space   $L^2(X,\mu)$
 in the strong operator topology. Therefore, homomorphism $\mathcal{K}$ induces the separable topology on $\overline{\mathfrak{S}}_\infty$. But, by Theorem 6.26 \cite{Kech_Rosendal}, $\overline{\mathfrak{S}}_\infty$ has exactly two separable group topologies. Namely,  trivial and the usual Polish  topology, which is   defined by fundamental system of neighborhoods $\mathfrak{S}(n,\infty)=\left\{ s\in\overline{\mathfrak{S}}_\infty: s(k)=k \text{ for } k=1,2,\ldots,n \right\}$ of unit. Therefore, the representation $\mathcal{K}$ is continuous. It follows  that there exist $n\in\mathbb{N}\cup 0$ and non-zero $\xi\in L^2(X,\mu)$ with the property
 \begin{eqnarray}\label{fixed}
  \mathcal{K}(g)\xi=\xi \text{ for all } g\in\mathfrak{S}(n,\infty).
 \end{eqnarray}
 Set $E=\left\{ x\in X: \xi(x)\neq0  \right\}$. Using (\ref{fixed}), we obtain
 \begin{eqnarray}\label{invariant_n}
 \mu(E\Delta(Eg))=0 \text{ for all } g\in\mathfrak{S}(n,\infty).
 \end{eqnarray}
 For $A\subset E$ we define measure $\nu$ by
 \begin{eqnarray*}
 \nu(A)=\int\limits_X \chi_A(x)\cdot|\xi(x)|^2{\rm d}\,\mu.
 \end{eqnarray*}
 It follows from (\ref{fixed}) and (\ref{invariant_n}) that $\nu$ is $\mathfrak{S}(n,\infty)$-invariant measure on $E$. This measure can be extend to the $\overline{\mathfrak{S}}_\infty$-invariant measure on $X$.
\subsection{The properties of the continuous representations of the group $\overline{\mathfrak{S}}_\infty$.}
To the proof of Theorems  \ref{abelian_main_th} we will use the general facts about the continuous representations of the group $\overline{\mathfrak{S}}_\infty$, which have been well studied by A. Lieberman \cite{Lieberman} and G. Olshanski \cite{Olsh1}, \cite{Olsh2}. In this section we will give the simple constructions  of the important operators and  the  short  direct proofs of their properties.

 Let $\mathcal{K}$ be the continuous representation of $\overline{\mathfrak{S}}_\infty$ in Hilbert space $\mathcal{H}$. It follows that for each $\eta\in \mathcal{H}$
\begin{eqnarray}\label{contnuous}
\lim\limits_{k\to\infty}\sup\limits_{s\in \mathfrak{S}(k,\infty)}\|\mathcal{K}(s)\eta-\eta \|=0.
\end{eqnarray}
 Set $^n\! \sigma_m=(n+1\;\;n+m+1)(n+2\;\;n+m+2)\cdots (n+m\;\,n+2m)$, where $(k\; j)$ is a permutation that interchanges two numbers $k$, $j$ and leaves all the others fixed. We will need few auxiliary lemmas.
\begin{Lm}\label{exists limit}\label{P_def}
The sequence of the operators $\left\{ \mathcal{K}\left(\,^n\! \sigma_m \right) \right\}_{m\in \mathbb{N}}$ converges in the weak operator topology to a self-adjoint operator $P_n$.
\end{Lm}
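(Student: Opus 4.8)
The plan is to show that each $\mathcal{K}\bigl({}^n\!\sigma_m\bigr)$ is a self-adjoint unitary and that the matrix coefficients $m\mapsto\langle\mathcal{K}({}^n\!\sigma_m)\eta,\zeta\rangle$ are Cauchy for every $\eta,\zeta\in\mathcal{H}$; the self-adjoint weak limit $P_n$ then follows formally. First I would note that ${}^n\!\sigma_m$ is a product of $m$ \emph{disjoint} transpositions, hence an involution, so $\mathcal{K}({}^n\!\sigma_m)^\ast=\mathcal{K}({}^n\!\sigma_m^{-1})=\mathcal{K}({}^n\!\sigma_m)$. Because the operators $\mathcal{K}({}^n\!\sigma_m)$ are unitary, the sequence is uniformly bounded, so once all matrix coefficients are shown to converge, the sesquilinear form $B(\eta,\zeta)=\lim_m\langle\mathcal{K}({}^n\!\sigma_m)\eta,\zeta\rangle$ is bounded and defines, via the Riesz representation, a bounded operator $P_n$ which is the weak limit; passing to the limit in $\langle\mathcal{K}({}^n\!\sigma_m)\eta,\zeta\rangle=\langle\eta,\mathcal{K}({}^n\!\sigma_m)\zeta\rangle$ shows $P_n=P_n^\ast$.

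It remains to prove the Cauchy property. Fix $\eta,\zeta\in\mathcal{H}$ and $\varepsilon>0$. By continuity (\ref{contnuous}) choose $k>n$ so large that $\|\mathcal{K}(s)\eta-\eta\|<\varepsilon$ and $\|\mathcal{K}(s)\zeta-\zeta\|<\varepsilon$ for all $s\in\mathfrak{S}(k,\infty)$, and put $d=k-n$. For $m>d$ I would split the transpositions in ${}^n\!\sigma_m$ according to whether they move a ``low'' point $\le k$: writing $\alpha_m=\prod_{i=1}^{d}(n+i\;\;n+m+i)$ and $\beta_m=\prod_{i=d+1}^{m}(n+i\;\;n+m+i)$, the factors are disjoint, so ${}^n\!\sigma_m=\alpha_m\beta_m$ with $\beta_m\in\mathfrak{S}(k,\infty)$ (all its moved points exceed $k$) and with $\alpha_m$ a product of the \emph{fixed} number $d$ of transpositions carrying the fixed set $\{n+1,\dots,k\}$ to the receding set $\{n+m+1,\dots,k+m\}$. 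Replacing $\mathcal{K}(\beta_m)\eta$ by $\eta$ costs at most $\varepsilon\|\zeta\|$, so $\langle\mathcal{K}({}^n\!\sigma_m)\eta,\zeta\rangle=\langle\mathcal{K}(\alpha_m)\eta,\zeta\rangle+O(\varepsilon)$, and the problem reduces to the convergence of $\langle\mathcal{K}(\alpha_m)\eta,\zeta\rangle$.

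For the latter I would use a conjugation argument. Given large $m,m'>d$, all the points $n+m+i,n+m'+i$ ($1\le i\le d$) exceed $k$, so there is a finite permutation $g\in\mathfrak{S}(k,\infty)$ fixing $\{1,\dots,k\}$ and sending $n+m+i\mapsto n+m'+i$; then $g\,\alpha_m\,g^{-1}=\alpha_{m'}$, since conjugation turns each $(n+i\;\;n+m+i)$ into $(n+i\;\;n+m'+i)$. Hence $\langle\mathcal{K}(\alpha_m)\eta,\zeta\rangle=\langle\mathcal{K}(\alpha_{m'})\mathcal{K}(g)\eta,\mathcal{K}(g)\zeta\rangle$, and replacing $\mathcal{K}(g)\eta,\mathcal{K}(g)\zeta$ by $\eta,\zeta$ (again at cost $O(\varepsilon)$, using $g\in\mathfrak{S}(k,\infty)$ and $\|\mathcal{K}(\alpha_{m'})\|=1$) gives $\langle\mathcal{K}(\alpha_m)\eta,\zeta\rangle-\langle\mathcal{K}(\alpha_{m'})\eta,\zeta\rangle=O(\varepsilon)$. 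Combining with the previous paragraph, $\langle\mathcal{K}({}^n\!\sigma_m)\eta,\zeta\rangle-\langle\mathcal{K}({}^n\!\sigma_{m'})\eta,\zeta\rangle=O(\varepsilon)$ for all large $m,m'$, which is the desired Cauchy estimate. The main obstacle is precisely what forces this two-stage argument: since ${}^n\!\sigma_m$ and ${}^n\!\sigma_{m'}$ consist of $m$ and $m'$ transpositions they have different cycle types and are \emph{not} conjugate, so one cannot compare them directly; the decomposition ${}^n\!\sigma_m=\alpha_m\beta_m$ isolates the bounded number $d$ of ``low'' transpositions, absorbs the rest into $\mathfrak{S}(k,\infty)$ where continuity applies, and only then permits the conjugation matching $\alpha_m$ with $\alpha_{m'}$.
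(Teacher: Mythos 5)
Your proof is correct and is essentially the paper's argument: both establish the weak-operator Cauchy property of the matrix coefficients by placing ${}^n\!\sigma_m$ and ${}^n\!\sigma_{m'}$ in a common double coset of a tail subgroup and then invoking the uniformity in (\ref{contnuous}) --- indeed, your split ${}^n\!\sigma_{m'}=\alpha_{m'}\beta_{m'}$ together with the conjugation $\alpha_{m'}=g\,\alpha_m\,g^{-1}$ combines into the sandwich ${}^n\!\sigma_{m'}=g\cdot{}^n\!\sigma_m\cdot\left(\beta_m^{-1}g^{-1}\beta_{m'}\right)$ with both outer factors in $\mathfrak{S}(k,\infty)$, which is exactly the paper's identity ${}^n\!\sigma_M=s\cdot{}^n\!\sigma_m\cdot t$, $s,t\in\mathfrak{S}(n+m,\infty)$, except for the depth of the subgroup. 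The only real difference is that the paper's factors live in $\mathfrak{S}(n+m,\infty)$, whose depth grows with $m$, so the Cauchy property follows directly as $m,M\to\infty$ without your $\varepsilon$-threshold bookkeeping; your fixed-$k$ version is correct, just longer and more explicit (it also spells out the self-adjointness and the passage from Cauchy coefficients to the weak limit, which the paper leaves implicit).
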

\begin{proof}
Let us prove that the sequence $\left\{ \mathcal{K}\left(\,^n\! \sigma_m \right) \right\}_{m\in \mathbb{N}}$ is fundamental in the weak operator topology.
Assuming for the convenience that $M>m$, we write $^n\! \sigma_M$ in the form $^n\! \sigma_M=s\cdot\,^n\! \sigma_m \cdot t$, where $s,t\in \mathfrak{S}(n+m,\infty)$. Hence, using (\ref{contnuous}), we have $\lim\limits_{m,M\to\infty}\left<\left( \mathcal{K}\left( \,^n\! \sigma_M\right)-\mathcal{K}\left(^n\! \sigma_m \right)\right)\eta,\zeta \right>=0$ for all $\eta,\zeta\in \mathcal{H}$.
\end{proof}
\begin{Lm}\label{projection}
  Operator $P_n$ is a projection.
\end{Lm}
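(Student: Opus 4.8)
The plan is to show that $P_n$ is idempotent; since Lemma~\ref{exists limit} already gives $P_n=P_n^*$, this makes $P_n$ an orthogonal projection. Note first that each $^{n}\!\sigma_m$ is a product of disjoint transpositions, hence an involution, so every $\mathcal{K}(\,^{n}\!\sigma_m)$ is a self-adjoint unitary. The crucial intermediate claim I would establish is that
\begin{eqnarray*}
\mathcal{K}(s)\,P_n=P_n\qquad\text{for every }s\in\mathfrak{S}(n,\infty).
\end{eqnarray*}
Granting this and specialising $s={}^{n}\!\sigma_m$ (which lies in $\mathfrak{S}(n,\infty)$), I get $\mathcal{K}(\,^{n}\!\sigma_m)P_n=P_n$ for all $m$; because right multiplication by the fixed bounded operator $P_n$ is continuous in the weak operator topology, passing to the limit $m\to\infty$ yields $P_n^2=\bigl(\mathrm{w}\text{-}\lim_m\mathcal{K}(\,^{n}\!\sigma_m)\bigr)P_n=\mathrm{w}\text{-}\lim_m\bigl(\mathcal{K}(\,^{n}\!\sigma_m)P_n\bigr)=P_n$, as desired.

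To prove the intermediate claim, fix $s\in\mathfrak{S}(n,\infty)$ and choose $p$ with $\mathrm{supp}\,s\subseteq\{n+1,\dots,n+p\}$. For $m\ge p$ the transpositions composing $^{n}\!\sigma_m$ carry $\{n+1,\dots,n+p\}$ onto $\{n+m+1,\dots,n+m+p\}$, so the conjugate $s_m:={}^{n}\!\sigma_m\, s\,{}^{n}\!\sigma_m$ is supported on $\{n+m+1,\dots,n+m+p\}$ and therefore belongs to $\mathfrak{S}(n+m,\infty)$. Using that $^{n}\!\sigma_m$ is an involution, this gives the factorisation $s\cdot{}^{n}\!\sigma_m={}^{n}\!\sigma_m\cdot s_m$, whence, since $\mathcal{K}$ is a homomorphism, $\mathcal{K}(s)\mathcal{K}(\,^{n}\!\sigma_m)=\mathcal{K}(\,^{n}\!\sigma_m)\mathcal{K}(s_m)$. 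Because left multiplication by $\mathcal{K}(s)$ is weakly continuous, $\mathcal{K}(s)P_n=\mathrm{w}\text{-}\lim_m\mathcal{K}(\,^{n}\!\sigma_m)\mathcal{K}(s_m)$, so it suffices to show this weak limit equals $P_n=\mathrm{w}\text{-}\lim_m\mathcal{K}(\,^{n}\!\sigma_m)$.

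The heart of the matter --- and the step I expect to require the most care --- is estimating $\mathcal{K}(\,^{n}\!\sigma_m)\bigl(\mathcal{K}(s_m)-I\bigr)$. For any $\eta,\zeta\in\mathcal{H}$, unitarity of $\mathcal{K}(\,^{n}\!\sigma_m)$ together with the Cauchy--Schwarz inequality gives $\bigl|\langle\mathcal{K}(\,^{n}\!\sigma_m)(\mathcal{K}(s_m)-I)\eta,\zeta\rangle\bigr|\le\|\mathcal{K}(s_m)\eta-\eta\|\,\|\zeta\|$. Since $s_m\in\mathfrak{S}(n+m,\infty)$, the continuity property~(\ref{contnuous}) forces $\|\mathcal{K}(s_m)\eta-\eta\|\le\sup_{t\in\mathfrak{S}(n+m,\infty)}\|\mathcal{K}(t)\eta-\eta\|\to0$ as $m\to\infty$. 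Hence $\langle(\mathcal{K}(\,^{n}\!\sigma_m)\mathcal{K}(s_m)-\mathcal{K}(\,^{n}\!\sigma_m))\eta,\zeta\rangle\to0$, so $\mathrm{w}\text{-}\lim_m\mathcal{K}(\,^{n}\!\sigma_m)\mathcal{K}(s_m)=\mathrm{w}\text{-}\lim_m\mathcal{K}(\,^{n}\!\sigma_m)=P_n$. This proves the intermediate claim and completes the argument. The only subtlety is the bookkeeping of supports ensuring that the conjugate $s_m$ is genuinely pushed into $\mathfrak{S}(n+m,\infty)$, which is precisely where the uniform continuity~(\ref{contnuous}) applies.
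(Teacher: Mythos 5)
Your proof is correct, but it takes a genuinely different route from the paper's. The paper computes $\langle P_n^2\eta,\zeta\rangle$ directly via a diagonal double limit: it chooses sequences $m_k$ and $M_k>2m_k$ so that $\mathcal{K}\left({}^{n}\!\sigma_{M_k}\right)\mathcal{K}\left({}^{n}\!\sigma_{m_k}\right)\to P_n^2$ weakly, and then invokes a commutation relation together with (\ref{contnuous}) to identify the limit with $P_n$. You instead first prove the invariance $\mathcal{K}(s)P_n=P_n$ --- which is exactly the paper's Lemma \ref{n_identical}, proved there only \emph{after} this lemma by the same conjugation trick applied to adjacent transpositions --- and then deduce idempotence from $\mathcal{K}\left({}^{n}\!\sigma_m\right)P_n=P_n$ by a single passage to the weak limit. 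Your ordering buys two things: it eliminates the diagonal-sequence bookkeeping, and, more substantively, your conjugation goes in the right direction: you conjugate the \emph{fixed}, finitely supported $s$ by the \emph{large} element ${}^{n}\!\sigma_m$, so that $s_m={}^{n}\!\sigma_m\, s\,{}^{n}\!\sigma_m$ is supported in $\{n+m+1,\dots,n+m+p\}$ and (\ref{contnuous}) applies. By contrast, the factorization printed in the paper's proof, ${}^{n}\!\sigma_{M_k}\cdot{}^{n}\!\sigma_{m_k}={}^{n}\!\sigma_{m_k}\cdot s_k$ with $s_k\in\mathfrak{S}(n+m_k,\infty)$, is not literally correct: that $s_k$ is the conjugate of ${}^{n}\!\sigma_{M_k}$ by ${}^{n}\!\sigma_{m_k}$, whose support is all of $\{n+1,\dots,n+2M_k\}$; the paper's computation is repaired by the symmetric identity ${}^{n}\!\sigma_{M_k}\cdot{}^{n}\!\sigma_{m_k}=s_k'\cdot{}^{n}\!\sigma_{M_k}$ with $s_k'={}^{n}\!\sigma_{M_k}\,{}^{n}\!\sigma_{m_k}\,{}^{n}\!\sigma_{M_k}\in\mathfrak{S}(n+M_k,\infty)$, which is precisely the direction of conjugation your argument uses. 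One small correction on your side: elements of $\mathfrak{S}(n,\infty)$ may have infinite support, so your intermediate claim as proved holds only for finitely supported $s\in\mathfrak{S}(n,\infty)$; state it that way (or extend it by density of the finitely supported elements and continuity of $\mathcal{K}$). Since you apply it only to $s={}^{n}\!\sigma_m$, which has finite support, nothing in your proof is affected.
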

\begin{proof}
Using lemma \ref{exists limit}, for any fixed $\eta,\zeta\in \mathcal{H}$ we find the sequences  $\left\{ m_k \right\}_{k\in \mathbb{N}}$ and  $\left\{ M_k \right\}_{k\in \mathbb{N}}$ such that $m_{k+1}>m_k$, $M_k>2m_k$  and
\begin{eqnarray}\label{approximat_quadrat}
\lim\limits_{k\to\infty}\left| \left<P_n^2\eta,\zeta \right>-\left<\mathcal{K}\left(\,^n\! \sigma_{M_k} \right)\cdot\mathcal{K}\left(\,^n\! \sigma_{m_k}\right)\eta,\zeta \right> \right|=0.
\end{eqnarray}
Now we notice, that $\,^n\! \sigma_{M_k} \cdot\,^n\! \sigma_{m_k} = \,^n\! \sigma_{m_k}\cdot s_k$, where $s_k\in \mathfrak{S}\left(n+m_k,\infty \right)$. Hence, using (\ref{contnuous}) and (\ref{approximat_quadrat}), we have\newline
$0=\lim\limits_{k\to\infty}\left| \left<P_n^2\eta,\zeta \right>-\left<\mathcal{K}\left(\,^n\! \sigma_{m_k} \right)\cdot\mathcal{K}\left(s_k\right)\eta,\zeta \right> \right|
\stackrel{(\ref{contnuous})}{=}\lim\limits_{k\to\infty}\left| \left<P_n^2\eta,\zeta \right>\right.$
 \newline$\left. -\left<\mathcal{K}\left(\,^n\! \sigma_{m_k} \right)\eta,\zeta \right> \right|$ $\stackrel{\text{Lemma \ref{exists limit}}}{=}\lim\limits_{k\to\infty}\left| \left<P_n^2\eta,\zeta \right>-\left<P_n\eta,\zeta \right> \right|$.
\end{proof}
\begin{Lm}\label{n_identical}
The equality $\mathcal{K}(s)\cdot P_n=P_n$  holds for any $s\in\mathfrak{S}(n,\infty)$.
\end{Lm}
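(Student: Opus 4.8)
The plan is to prove the identity first for \emph{finite} $s\in\mathfrak{S}(n,\infty)$ and only afterwards to remove the finiteness restriction by continuity. The starting observation is that $\mathcal{K}(s)$ is a fixed unitary, so the map $T\mapsto \mathcal{K}(s)T$ is continuous in the weak operator topology; hence by Lemma \ref{exists limit} the operator $\mathcal{K}(s)P_n$ is the weak limit of $\mathcal{K}(s)\,\mathcal{K}(\,^n\!\sigma_m)=\mathcal{K}(s\cdot\,^n\!\sigma_m)$ as $m\to\infty$. Thus everything reduces to understanding the elements $s\cdot\,^n\!\sigma_m$ for large $m$, which I would rewrite exactly as in the proof of Lemma \ref{projection}.

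Assume first that $s$ has finite support. Since $s$ fixes $1,\dots,n$, we have ${\rm supp}\,s\subseteq\{n+1,\dots,n+p\}$ for some $p$, so for every $m\ge p$ the support of $s$ lies in the first block $\{n+1,\dots,n+m\}$ interchanged by $\,^n\!\sigma_m$. As $\,^n\!\sigma_m$ is an involution carrying that block onto $\{n+m+1,\dots,n+2m\}$, the conjugate $s''_m:=\,^n\!\sigma_m\, s\,^n\!\sigma_m$ is supported in $\{n+m+1,\dots,n+2m\}$, i.e. $s''_m\in\mathfrak{S}(n+m,\infty)$, and $s\cdot\,^n\!\sigma_m=\,^n\!\sigma_m\cdot s''_m$. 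Hence for all $\eta,\zeta\in\mathcal{H}$,
\[
\langle\mathcal{K}(s\cdot\,^n\!\sigma_m)\eta,\zeta\rangle=\langle\mathcal{K}(\,^n\!\sigma_m)\eta,\zeta\rangle+\langle\mathcal{K}(\,^n\!\sigma_m)(\mathcal{K}(s''_m)-I)\eta,\zeta\rangle .
\]
The first term tends to $\langle P_n\eta,\zeta\rangle$ by Lemma \ref{exists limit}, while the second is bounded by $\|(\mathcal{K}(s''_m)-I)\eta\|\,\|\zeta\|\le \|\zeta\|\sup_{t\in\mathfrak{S}(n+m,\infty)}\|(\mathcal{K}(t)-I)\eta\|$, which tends to $0$ by (\ref{contnuous}) since $n+m\to\infty$. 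This yields $\langle\mathcal{K}(s)P_n\eta,\zeta\rangle=\langle P_n\eta,\zeta\rangle$, that is, $\mathcal{K}(s)P_n=P_n$ for every finite $s\in\mathfrak{S}(n,\infty)$.

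The step I expect to be the main obstacle is the passage to arbitrary $s\in\mathfrak{S}(n,\infty)$: for such $s$ the conjugate $\,^n\!\sigma_m\, s\,^n\!\sigma_m$ need no longer lie in $\mathfrak{S}(n+m,\infty)$, because $s$ may move points outside $\{n+1,\dots,n+2m\}$ for every $m$, so the second term above can fail to vanish. This is exactly where the finite-support hypothesis was essential, and I would resolve it by a density/continuity argument. The finite permutations fixing $1,\dots,n$ are dense in $\mathfrak{S}(n,\infty)$ in the Polish topology: given $s$ and $N\ge n$, put $F=\{1,\dots,N\}\cup s^{-1}\{1,\dots,N\}$ and extend the injection $s|_{F}$ to a finite permutation $s_N$ agreeing with $s$ on $F$; then $s_N\in\mathfrak{S}(n,\infty)$ is finite and $s_N\to s$, since eventually $s_N(k)=s(k)$ and $s_N^{-1}(k)=s^{-1}(k)$ for each fixed $k$. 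Because $\mathcal{K}$ is continuous, $\mathcal{K}(s_N)P_n\eta\to\mathcal{K}(s)P_n\eta$ for every $\eta\in\mathcal{H}$; but $\mathcal{K}(s_N)P_n=P_n$ by the finite case, so $\mathcal{K}(s)P_n\eta=P_n\eta$. This gives $\mathcal{K}(s)P_n=P_n$ for all $s\in\mathfrak{S}(n,\infty)$, as required.
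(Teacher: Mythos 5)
Your proof is correct, and its engine is the same as the paper's: commute $\,^n\!\sigma_m$ past $s$, so that $s\cdot\,^n\!\sigma_m=\,^n\!\sigma_m\cdot s''_m$ with $s''_m\in\mathfrak{S}(n+m,\infty)$, and kill the resulting error term using (\ref{contnuous}). The difference lies in how the two arguments pass from special elements to all of $\mathfrak{S}(n,\infty)$. The paper runs the commutation only for adjacent transpositions $(m\;\,m+1)$, $m>n$, and finishes by asserting that these transpositions generate $\mathfrak{S}(n,\infty)$. Taken literally that assertion is false: they generate algebraically only the subgroup of \emph{finite} permutations fixing $1,\dots,n$, whereas $\mathfrak{S}(n,\infty)$ consists of all such bijections, including those of infinite support; the paper's conclusion therefore tacitly uses that this finitary subgroup is dense and that $\left\{ g:\mathcal{K}(g)P_n=P_n \right\}$ is closed under the continuous representation $\mathcal{K}$. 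That is precisely the density-plus-continuity step you flagged as the main obstacle and then carried out explicitly with the approximants $s_N$ agreeing with $s$ on $\{1,\dots,N\}\cup s^{-1}\{1,\dots,N\}$. So your two-stage proof (all finite $s$ at once by conjugation, then the strong limit $\mathcal{K}(s_N)P_n\eta\to\mathcal{K}(s)P_n\eta$) is not merely a valid variant: it supplies the one step the paper's own proof leaves implicit. The only price is a slightly longer verification of the conjugation identity for a general finite $s$ instead of a single transposition, and that verification is correct as you wrote it.
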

\begin{proof}
Suppose that $m> n$ and $M\geq 2m$. Then $(m\;m+1)\cdot\,^n\! \sigma_{M}=\,^n\! \sigma_{M}\cdot (m+M\;\;m+M+1)$. Hence,  applying lemma \ref{exists limit} and (\ref{contnuous}), we have\newline
$
\left< \mathcal{K}((m\;\;m+1)) P_n\eta,\zeta\right>=\lim\limits_{M\to\infty}\left< \mathcal{K}((m\;\;m+1)) \cdot\mathcal{K}(\,^n\! \sigma_{M} )\eta,\zeta\right>$\newline
$=\lim\limits_{M\to\infty}\left< \mathcal{K}(\,^n\! \sigma_{M} )\cdot\mathcal{K}((m+M\;\;m+M+1)) \eta,\zeta\right>\stackrel{\text{(\ref{contnuous})}}{=} \lim\limits_{M\to\infty}\left< \mathcal{K}(\,^n\! \sigma_{M} )\eta,\zeta\right>$ for any $\eta$, $\zeta$ $in$ $\mathcal{H}$. By lemma \ref{exists limit}, $ \mathcal{K}((m\;\;m+1))\cdot P_n=P_n$. Since the transpositions $(m\;\; m+1)$ $(m> n)$ generate the subgroup $\mathfrak{S}(n,\infty)$, lemma is proved.
\end{proof}
It follows from Lemmas \ref{P_def} and \ref{n_identical}
 that
 \begin{eqnarray}\label{projection_fixed}
 P_n\mathcal{H}=\left\{ \eta\in\mathcal{H}:\mathcal{K}(s)\eta=\eta \text{ for all } s\in\mathfrak{S}(n, \infty) \right\}.
 \end{eqnarray}
 \begin{Lm}\label{O_def}
 The sequence $\left\{ \mathcal{K}((k\;\;N)) \right\}_{N\in\mathbb{N}}$  converges in the weak operator topology to the self-adjoint projection $O_k$.
 \end{Lm}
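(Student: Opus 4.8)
The plan is to follow the template established in Lemmas \ref{exists limit} and \ref{projection}, replacing the products of disjoint transpositions $^n\!\sigma_m$ by the single transpositions $(k\;N)$. Three ingredients are needed: two identities in $\overline{\mathfrak{S}}_\infty$, the fact that each $\mathcal{K}((k\;N))$ is self-adjoint (since $(k\;N)$ is an involution, $\mathcal{K}((k\;N))^*=\mathcal{K}((k\;N)^{-1})=\mathcal{K}((k\;N))$), and continuity (\ref{contnuous}).

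First I would prove weak convergence by showing that $\{\mathcal{K}((k\;N))\}_N$ is fundamental in the weak operator topology. For $N'>N>k$ one has the conjugation identity $(k\;N')=(N\;N')\,(k\;N)\,(N\;N')$, where $(N\;N')\in\mathfrak{S}(N-1,\infty)$. Writing $u_N=\mathcal{K}((N\;N'))$ and using that $u_N$ is self-adjoint, we get $\langle\mathcal{K}((k\;N'))\eta,\zeta\rangle=\langle\mathcal{K}((k\;N))\,u_N\eta,\,u_N\zeta\rangle$. Since $\|u_N\eta-\eta\|\to0$ and $\|u_N\zeta-\zeta\|\to0$ as $N\to\infty$ by (\ref{contnuous}) (the sup being taken over all of $\mathfrak{S}(N-1,\infty)$, hence uniformly in $N'$), the difference of this from $\langle\mathcal{K}((k\;N))\eta,\zeta\rangle$ tends to $0$ uniformly in $N'>N$. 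Hence the sequence is weakly Cauchy and converges to some operator $O_k$, and self-adjointness of $O_k$ is immediate from self-adjointness of each $\mathcal{K}((k\;N))$.

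Second, to see that $O_k$ is a projection I would reproduce the argument of Lemma \ref{projection}. Using self-adjointness, $\langle O_k^2\eta,\zeta\rangle=\langle O_k\eta,O_k\zeta\rangle$, and a diagonal extraction from the resulting double weak limit yields indices $N_j<N_j'$, with $N_j'$ growing fast enough, such that $\langle O_k^2\eta,\zeta\rangle=\lim_j\langle\mathcal{K}((k\;N_j'))\,\mathcal{K}((k\;N_j))\eta,\zeta\rangle$. The key combinatorial identity here is $(k\;N')\,(k\;N)=(k\;N)\,(N\;N')$ for $N'>N>k$, with $(N\;N')\in\mathfrak{S}(N-1,\infty)$. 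Substituting and invoking (\ref{contnuous}) to replace $\mathcal{K}((N_j\;N_j'))\eta$ by $\eta$ in the limit gives $\langle O_k^2\eta,\zeta\rangle=\lim_j\langle\mathcal{K}((k\;N_j))\eta,\zeta\rangle=\langle O_k\eta,\zeta\rangle$, so $O_k^2=O_k$.

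I expect the only delicate point to be the same one as in Lemma \ref{projection}: weak convergence is not preserved under products, so the equality $\langle O_k^2\eta,\zeta\rangle=\lim_j\langle\mathcal{K}((k\;N_j'))\,\mathcal{K}((k\;N_j))\eta,\zeta\rangle$ must be obtained by a careful diagonal choice of $N_j,N_j'$ rather than by naively passing to the limit in each factor separately. Once the two group identities above are in hand, the remainder is a routine repetition of the earlier estimates.
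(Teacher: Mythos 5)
Your proposal is correct and is essentially the paper's own proof: the paper's one-line argument invokes exactly your two identities together with (\ref{contnuous}) --- its first printed identity $(k\;N_2)=(N_1\;N_2)(k\;N_1)(k\;N_2)$ is evidently a typo for the conjugation relation $(k\;N_2)=(N_1\;N_2)(k\;N_1)(N_1\;N_2)$ that you use, and its second identity $(k\;N_1)(k\;N_2)=(k\;N_2)(N_1\;N_2)$ is precisely your $(k\;N')(k\;N)=(k\;N)(N\;N')$. Your write-up merely supplies the details the paper leaves implicit (self-adjointness of each $\mathcal{K}((k\;N))$, and the Lemma \ref{projection}-style diagonal extraction needed because weak limits do not pass through products).
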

 \begin{proof}
 Using (\ref{contnuous}) and the equality $(k\;\;N_2)=(N_1\;\;N_2)(k\;\;N_1) (k\;\;N_2)$, we obtain that the  sequence $\left\{ \mathcal{K}((k\;\;N)) \right\}_{N\in\mathbb{N}}$ is fundamental. Since  $(k\;\;N_1)(k\;\;N_2)=(k\;\;N_2)(N_1\;\;N_2)$, operator $P_k$ is a self-adjoint projection.
 \end{proof}
 \begin{Lm}
 The projections $P_n$ end $O_k$ commute: $P_nO_k=O_kP_n$.
 \end{Lm}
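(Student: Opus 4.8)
The plan is to verify the asserted operator identity on matrix elements, i.e.\ to show that $\langle P_nO_k\eta,\zeta\rangle=\langle O_kP_n\eta,\zeta\rangle$ for all $\eta,\zeta\in\mathcal{H}$, splitting the argument according to the position of $k$ relative to $n$. Throughout I will use three facts: that $\mathcal{K}(\,^n\!\sigma_m)$ and $\mathcal{K}((k\;\;N))$ are self-adjoint, since both group elements are involutions; that $\mathcal{K}(\,^n\!\sigma_m)\eta\to P_n\eta$ and $\mathcal{K}((k\;\;N))\eta\to O_k\eta$ weakly for every $\eta$ (Lemmas \ref{P_def} and \ref{O_def}); and the elementary fact that composing a fixed bounded operator with a weakly convergent sequence preserves weak convergence, so that $\langle\psi,\mathcal{K}(\,^n\!\sigma_m)\zeta\rangle\to\langle\psi,P_n\zeta\rangle$ for any fixed $\psi$, and similarly for $O_k$. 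The only combinatorial input is that two permutations with disjoint supports commute.

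In the case $k>n$ I would exploit that $(k\;\;N)\in\mathfrak{S}(n,\infty)$ as soon as $N>n$, so Lemma \ref{n_identical} gives $\mathcal{K}((k\;\;N))P_n=P_n$, and taking adjoints also $P_n\mathcal{K}((k\;\;N))=P_n$. Feeding the first relation into the weak-limit expression $O_kP_n\eta=\lim_N\mathcal{K}((k\;\;N))P_n\eta$ (legitimate since $P_n\eta$ is a fixed vector) shows $O_kP_n=P_n$, while feeding the second into $\langle P_nO_k\eta,\zeta\rangle=\langle O_k\eta,P_n\zeta\rangle=\lim_N\langle\eta,\mathcal{K}((k\;\;N))P_n\zeta\rangle$ shows $P_nO_k=P_n$. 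Hence both products coincide with $P_n$, and in particular they commute.

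In the remaining case $k\le n$ the plan is to use disjointness of supports. Since $k\notin\{n+1,\dots,n+2m\}={\rm supp}\,(\,^n\!\sigma_m)$ for every $m$, for each fixed $m$ and every $N>n+2m$ the transposition $(k\;\;N)$ and the permutation $\,^n\!\sigma_m$ have disjoint supports and therefore commute. Applying weak convergence twice (each time against a fixed vector) I would first write
\[
\langle P_nO_k\eta,\zeta\rangle=\langle O_k\eta,P_n\zeta\rangle=\lim_{m\to\infty}\lim_{N\to\infty}\langle \mathcal{K}((k\;\;N))\eta,\mathcal{K}(\,^n\!\sigma_m)\zeta\rangle ,
\]
where the inner limit runs over $N$ with $m$ held fixed. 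For each fixed $m$, as soon as $N>n+2m$ I may use self-adjointness together with the commutation $\mathcal{K}((k\;\;N))\mathcal{K}(\,^n\!\sigma_m)=\mathcal{K}(\,^n\!\sigma_m)\mathcal{K}((k\;\;N))$ to rewrite the matrix element as $\langle\mathcal{K}(\,^n\!\sigma_m)\eta,\mathcal{K}((k\;\;N))\zeta\rangle$; letting $N\to\infty$ turns this into $\langle\mathcal{K}(\,^n\!\sigma_m)\eta,O_k\zeta\rangle$, and then letting $m\to\infty$ yields $\langle P_n\eta,O_k\zeta\rangle=\langle O_kP_n\eta,\zeta\rangle$. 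Comparing this with the displayed identity gives the desired equality of matrix elements, hence $P_nO_k=O_kP_n$.

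The one step that requires care — and what I expect to be the main obstacle — is the order of the two limits in the case $k\le n$. The disjoint-support commutation is available only when $N$ exceeds $n+2m$, so it is essential that $N\to\infty$ be carried out first, for each fixed $m$, and $m\to\infty$ only afterwards. Taking the limits in the opposite order, or along an uncontrolled diagonal, would allow the growing support ${\rm supp}\,(\,^n\!\sigma_m)$ to swallow $N$ and would destroy the commutation; the chosen ordering is exactly what guarantees that every matrix element eventually lands in the commuting regime, after which passage to the weak limits is routine.
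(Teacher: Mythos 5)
Your proof is correct and follows essentially the same route as the paper: the case $k>n$ is settled through Lemma \ref{n_identical} together with self-adjointness, and the case $k\le n$ through the disjoint-support commutation $(k\;\;N)\cdot\,^n\!\sigma_m=\,^n\!\sigma_m\cdot(k\;\;N)$, valid once $N>n+2m$. The only cosmetic difference is that you organize the passage to the limit as an iterated limit ($N\to\infty$ first at fixed $m$, then $m\to\infty$), whereas the paper extracts diagonal sequences $M_l$ and $N_l$ with $N_l>n+2M_l$ --- the same control over which index outruns the other, phrased differently.
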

 \begin{proof}
 Since, by Lemma \ref{n_identical}, $O_kP_n=P_n$ for $k>n$, we suppose that $k\leq n$. By Lemmas \ref{P_def} and \ref{O_def}, for any $\eta, \zeta\in\mathcal{H}$  there exists the sequence $\left\{ M_l\right\}_{l\in \mathbb{N}}\subset\mathbb{N}$ such that $M_{k+1}>M_k$ and
 \begin{eqnarray}\label{app_P}
 \begin{split}
 \lim\limits_{l\to\infty}\left| \left< P_nO_k\eta,\zeta\right>-\left< \mathcal{K}\left(\,^n\! \sigma_{M_l}\right) O_k\eta,\zeta\right>\right|=0,\\
  \lim\limits_{l\to\infty}\left| \left< O_kP_n\eta,\zeta\right>-\left< O_k\mathcal{K}\left(\,^n\! \sigma_{M_l}\right) \eta,\zeta\right>\right|=0.
  \end{split}
 \end{eqnarray}
 For the same reason we can to find  the sequence $\left\{ N_l\right\}_{l\in \mathbb{N}}\subset\mathbb{N}$ such that $N_{k+1}>N_k>n+2M_k$ and
 \begin{eqnarray}\label{app_O}
 \begin{split}
  \lim\limits_{\to\infty}\left| \left< \mathcal{K}\left(\,^n\! \sigma_{M_l}\right) \mathcal{K}\left(k\;\;N_l \right)\eta,\zeta\right>-\left< \mathcal{K}\left(\,^n\! \sigma_{M_l}\right) O_k\eta,\zeta\right>\right|=0,\\
  \lim\limits_{l\to\infty}\left| \left<\mathcal{K}\left(k\;\;N_l \right) \mathcal{K}\left(\,^n\! \sigma_{M_l}\right)\eta,\zeta\right>-\left< O_k\mathcal{K}\left(\,^n\! \sigma_{M_l}\right) \eta,\zeta\right>\right|=0.
  \end{split}
 \end{eqnarray}
 Now, using (\ref{app_P}), (\ref{app_O}) and  the equality $\left(k\;\;N_l \right) \cdot \,^n\! \sigma_{M_l}=\,^n\! \sigma_{M_l}\cdot\left(k\;\;N_l \right) $, we obtain that
 $P_nO_k=O_kP_n$.
 \end{proof}
 \begin{Lm}\label{Lm10}
 Let $\mathfrak{S}(k,n,\infty)$ denotes the group generated by the transposition $(k\;\;n+1)$ and the subgroup $\mathfrak{S}(n,\infty)$. Then $O_kP_n$ is the self-adjoint projection on the subspace $\left\{\eta\in\mathcal{H}: \mathcal{K}(s)\eta=\eta \text{  for all } s\in \mathfrak{S}(k,n,\infty) \right\}$. In particular, $O_nP_n=P_{n-1}$ (see (\ref{projection_fixed})).
 \end{Lm}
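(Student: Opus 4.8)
The plan is to exploit that $O_k$ and $P_n$ commute, which was just established. For commuting self-adjoint projections the product $O_kP_n$ is again a self-adjoint projection, and its range is $O_k\mathcal{H}\cap P_n\mathcal{H}$; this already settles the ``self-adjoint projection'' part of the claim. By (\ref{projection_fixed}) the factor $P_n\mathcal{H}$ is the fixed subspace of $\mathfrak{S}(n,\infty)$, so the whole problem reduces to identifying $O_k\mathcal{H}\cap P_n\mathcal{H}$ with the fixed subspace of $\mathfrak{S}(k,n,\infty)$. I would first dispose of the uninteresting range of indices: if $k>n$, then each $(k\;N)$ with $N>n$ lies in $\mathfrak{S}(n,\infty)$, so $O_kP_n=P_n$ while $\mathfrak{S}(k,n,\infty)=\mathfrak{S}(n,\infty)$, and there is nothing to prove. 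Thus I assume $1\le k\le n$; conjugating the generator $(k\;n+1)$ by the transpositions $(n+1\;j)\in\mathfrak{S}(n,\infty)$ shows that $\mathfrak{S}(k,n,\infty)$ is precisely the group of finitely supported permutations of the set $\{k\}\cup\{n+1,n+2,\ldots\}$, so that it is generated by $\mathfrak{S}(n,\infty)$ together with all $(k\;j)$, $j>n$.

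Write $\mathcal{F}$ for the fixed subspace of $\mathfrak{S}(k,n,\infty)$. The inclusion $\mathcal{F}\subseteq O_k\mathcal{H}\cap P_n\mathcal{H}$ is immediate: a vector $\eta\in\mathcal{F}$ is fixed by $\mathfrak{S}(n,\infty)$, hence $P_n\eta=\eta$; and it is fixed by every $(k\;j)$ with $j>n$, so $\mathcal{K}((k\;N))\eta=\eta$ for all $N>n$, whence $O_k\eta=\eta$.

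For the reverse inclusion the key observation, and the main point of the argument, is that weak convergence to a projection upgrades to norm convergence on its range: if $O_k\eta=\eta$ then $\langle\mathcal{K}((k\;N))\eta,\eta\rangle\to\|\eta\|^2$, while $\|\mathcal{K}((k\;N))\eta\|=\|\eta\|$ by unitarity, so that $\|\mathcal{K}((k\;N))\eta-\eta\|^2=2\|\eta\|^2-2\,{\rm Re}\,\langle\mathcal{K}((k\;N))\eta,\eta\rangle\to0$. Now let $\eta\in O_k\mathcal{H}\cap P_n\mathcal{H}$. For $N>n+1$ the conjugation identity $(k\;n+1)=(n+1\;N)(k\;N)(n+1\;N)$, together with $(n+1\;N)\in\mathfrak{S}(n,\infty)$ and hence $\mathcal{K}((n+1\;N))\eta=\eta$, gives
\[
\mathcal{K}((k\;n+1))\eta=\mathcal{K}((n+1\;N))\,\mathcal{K}((k\;N))\eta .
\]
The left-hand side is independent of $N$, while the right-hand side equals $\mathcal{K}((n+1\;N))\bigl(\mathcal{K}((k\;N))\eta-\eta\bigr)+\eta$ and therefore converges to $\eta$ in norm, since $\mathcal{K}((n+1\;N))$ is unitary and $\mathcal{K}((k\;N))\eta\to\eta$. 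Hence $\mathcal{K}((k\;n+1))\eta=\eta$; combined with $P_n\eta=\eta$ this shows that $\eta$ is fixed by all generators of $\mathfrak{S}(k,n,\infty)$, i.e. $\eta\in\mathcal{F}$.

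Finally, the special case $O_nP_n=P_{n-1}$ falls out once one notices that $\mathfrak{S}(n,n,\infty)$, being generated by $\mathfrak{S}(n,\infty)$ and $(n\;n+1)$, is exactly $\mathfrak{S}(n-1,\infty)$; the statement just proved and (\ref{projection_fixed}) then identify both $O_nP_n$ and $P_{n-1}$ with the projection onto the fixed subspace of $\mathfrak{S}(n-1,\infty)$. I expect the only delicate step to be the passage from the merely weak definition of $O_k$ to the exact functional equation $\mathcal{K}((k\;n+1))\eta=\eta$; this rests entirely on the norm-convergence upgrade above, which is precisely where the unitarity of $\mathcal{K}$ enters.
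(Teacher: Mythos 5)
Your proof is correct, and it rests on the same algebraic ingredients as the paper's (the commutativity lemma, the conjugation identity relating $(k\;n+1)$ to $(k\;N)$ via $(n+1\;N)$, and Lemma \ref{n_identical}/(\ref{projection_fixed})), but the analytic mechanism of the key step is genuinely different. The paper never leaves the weak-limit formalism: it proves the single operator identity $\mathcal{K}((k\;n+1))\,O_kP_n=O_kP_n$ by writing $O_kP_n$ as the weak limit of $\mathcal{K}((k\;N))P_n$, using $(k\;n+1)(k\;N)=(k\;N)(n+1\;N)$ and absorbing $\mathcal{K}((n+1\;N))$ into $P_n$ by Lemma \ref{n_identical}; the identification of the range of $O_kP_n$ with the fixed subspace of $\mathfrak{S}(k,n,\infty)$, including the reverse inclusion, is left implicit. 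You instead argue at the level of vectors: you upgrade the weak convergence $\mathcal{K}((k\;N))\to O_k$ to norm convergence on the range of $O_k$ via unitarity ($\|\mathcal{K}((k\;N))\eta-\eta\|^2=2\|\eta\|^2-2\,\mathrm{Re}\langle\mathcal{K}((k\;N))\eta,\eta\rangle\to0$), and then pass to the limit in the conjugation identity. This costs a little more writing but buys a fully explicit two-sided argument: both inclusions, the degenerate case $k>n$, and the identification $\mathfrak{S}(n,n,\infty)=\mathfrak{S}(n-1,\infty)$ behind the ``in particular'' clause are all spelled out, none of which the paper does.

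One slip worth correcting, though it does not affect the argument: $\mathfrak{S}(k,n,\infty)$ is \emph{not} the group of finitely supported permutations of $\{k\}\cup\{n+1,n+2,\ldots\}$. In this paper $\mathfrak{S}(n,\infty)$ consists of \emph{all} bijections fixing $1,\ldots,n$, including infinitely supported ones, so $\mathfrak{S}(k,n,\infty)$ is the group of all bijections of $\mathbb{N}$ that fix $\{1,\ldots,n\}\setminus\{k\}$ pointwise. What your proof actually uses is only that $\mathfrak{S}(k,n,\infty)$ is generated by $\mathfrak{S}(n,\infty)$ together with the transpositions $(k\;j)$, $j>n$; that statement follows from your conjugation observation and is correct, so the gap is purely in the side remark, not in the proof.
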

 \begin{proof}
 The proof follows from the next chain of the equalities\newline
 $\left< \mathcal{K}((k\,\,n+1))\cdot O_kP_n\eta,\zeta \right>\stackrel{\text{Lemma \ref{O_def}}}{=}\lim\limits_{N\to\infty}\left< \mathcal{K}((k\,\,n+1)\cdot(k\;\;N))\cdot P_n\eta,\zeta \right>$\newline
 $=\lim\limits_{N\to\infty}\left< \mathcal{K}((k\;\;N))\cdot\mathcal{K}((n+1\;\;N))\cdot P_n\eta,\zeta \right>$\newline
 $\stackrel{\text{Lemma \ref{n_identical}}}{=}\lim\limits_{N\to\infty}\left< \mathcal{K}((k\;\;N))\cdot P_n\eta,\zeta \right>\stackrel{\text{Lemma \ref{O_def}}}{=}\left< O_kP_n\eta,\zeta \right>$.
 \end{proof}
Since the representation $\mathcal{K}$ is continuous, then there exists $n\in\mathbb{N}$ such that $P_n\neq 0$. Set ${\rm depth}(\mathcal{K})=\min\left\{n:P_n\neq 0  \right\}$.\label{depth}
\begin{Lm}\label{Shift_lemma}
If $n={\rm depth}(\mathcal{K})$ and $g\notin \mathfrak{S}(n,\infty)$ then $P_n\mathcal{K}(g)P_n=0$.
\end{Lm}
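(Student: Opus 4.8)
The plan is to test $P_n\mathcal{K}(g)P_n$ on the range of $P_n$. By (\ref{projection_fixed}) this range is exactly the space of $\mathfrak{S}(n,\infty)$-fixed vectors, so the asserted equality amounts to $\left<\mathcal{K}(g)\eta,\zeta\right>=0$ for all $\eta,\zeta$ with $P_n\eta=\eta$ and $P_n\zeta=\zeta$. Two remarks organize the computation. First, since $\mathcal{K}(s)P_n=P_n$ for $s\in\mathfrak{S}(n,\infty)$ by Lemma \ref{n_identical}, and $P_n\mathcal{K}(s)=P_n$ by taking adjoints, the operator $P_n\mathcal{K}(g)P_n$ depends only on the double coset $\mathfrak{S}(n,\infty)\,g\,\mathfrak{S}(n,\infty)$; this lets me replace $g$ by any convenient representative. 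Second, I will lean on the projections $O_k$ of Lemma \ref{O_def}, together with an intertwining relation and a vanishing property that I establish next.

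The intertwining relation is as follows. Fix $k$ and put $j=g(k)$. For every $N\neq k$ one has $g\,(k\;N)\,g^{-1}=(j\;\;g(N))$, and because $g$ is a bijection of $\mathbb{N}$ one has $g(N)\to\infty$ as $N\to\infty$. Passing to the weak limit in $\mathcal{K}(g)\,\mathcal{K}((k\;N))\,\mathcal{K}(g^{-1})=\mathcal{K}((j\;\;g(N)))$ — a limit that, like $O_j$ itself, may be taken along any sequence tending to infinity — yields
\[
\mathcal{K}(g)\,O_k\,\mathcal{K}(g)^{-1}=O_{g(k)},\qquad\text{equivalently}\qquad O_{g(k)}\,\mathcal{K}(g)=\mathcal{K}(g)\,O_k.
\]
Here multiplication by the fixed bounded operators $\mathcal{K}(g^{\pm1})$ preserves weak convergence, so no product of two limits ever appears. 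As for the vanishing property: for $j>n$ every transposition $(j\;N)$ with $N>n$ lies in $\mathfrak{S}(n,\infty)$, so $O_jP_n=P_n$ by Lemma \ref{n_identical}, and hence $P_nO_j=P_n$ by adjointness; while for $k\leq n$, Lemma \ref{Lm10} identifies $O_kP_n$ with the projection onto the fixed space of $\mathfrak{S}(k,n,\infty)$, a subgroup conjugate in $\overline{\mathfrak{S}}_\infty$ to $\mathfrak{S}(n-1,\infty)$ (carry $\{1,\dots,n\}\setminus\{k\}$ onto $\{1,\dots,n-1\}$). Since the fixed space of $\mathfrak{S}(n-1,\infty)$ is the image of $P_{n-1}=0$ by minimality of the depth, I obtain $O_kP_n=0$ for all $k\leq n$.

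With these in hand the substantive case is immediate. Suppose some $k\leq n$ is carried by $g$ out of the window, that is $j:=g(k)>n$; equivalently $g$ does not stabilize $\{1,\dots,n\}$ as a set, which is exactly what happens whenever $g(\{1,\dots,n\})\neq\{1,\dots,n\}$. Then for $\eta,\zeta\in P_n\mathcal{H}$ I have $O_j\zeta=\zeta$ (as $O_jP_n=P_n$) and $O_k\eta=0$ (as $O_kP_n=0$), so, using self-adjointness of $O_j$ and the intertwining relation,
\[
\left<\mathcal{K}(g)\eta,\zeta\right>=\left<\mathcal{K}(g)\eta,O_j\zeta\right>=\left<O_j\mathcal{K}(g)\eta,\zeta\right>=\left<\mathcal{K}(g)O_k\eta,\zeta\right>=0.
\]
This settles $P_n\mathcal{K}(g)P_n=0$ for every $g$ that fails to preserve $\{1,\dots,n\}$ setwise.

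The step I expect to be the main obstacle is the complementary case, in which $g$ stabilizes $\{1,\dots,n\}$ as a set. By the double-coset normalization from the first paragraph one may then take $g$ to be a finite permutation of the window (choose $t=g^{-1}g\!\restriction\!{\{1,\dots,n\}}\in\mathfrak{S}(n,\infty)$), so every image $g(k)$ again lies in $\{1,\dots,n\}$; consequently $O_{g(k)}P_n=0$ rather than $P_n$, and the collapse just displayed is no longer available. This is precisely where one must exploit the \emph{minimality} of $n=\mathrm{depth}(\mathcal{K})$ — encoded in $P_{n-1}=0$, i.e. $O_nP_n=0$ via Lemma \ref{Lm10} — in an essential way, going beyond the mere nonvanishing of $P_n$. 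Throughout, the governing technical discipline is to never form a product of two weak-operator limits, arranging each identity so that only a single factor varies; carrying this bookkeeping through the window-preserving case is the delicate heart of the argument.
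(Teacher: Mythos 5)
Your treatment of the case where some $k\leq n$ has $g(k)>n$ is correct, and it is in substance the paper's own proof: the paper likewise writes $g=(k\;\,m)\,s$ with $m=g(k)>n$, uses Lemma \ref{n_identical} and a single weak limit to convert $P_n\mathcal{K}(g)P_n$ into $P_n\,O_k\,\mathcal{K}(s)\,P_n$, and then annihilates $P_nO_k$ by conjugating with $(k\;\,n)$ so that it becomes $P_nO_n=P_{n-1}=0$ (Lemma \ref{Lm10} plus minimality of the depth). Your intertwining relation $O_{g(k)}\mathcal{K}(g)=\mathcal{K}(g)O_k$ together with the identities $O_jP_n=P_n$ for $j>n$ and $O_kP_n=0$ for $k\leq n$ is the same set of ingredients in a cleaner package, and your insistence on never multiplying two weak limits is exactly the discipline the paper's computations follow.

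The case you defer --- $g$ stabilizing $\{1,\ldots,n\}$ setwise but not pointwise --- is, however, not a gap that more work could close: the lemma is \emph{false} for such $g$. Such a $g$ normalizes $\mathfrak{S}(n,\infty)$ (conjugation by $g$ carries permutations supported in $\{n+1,n+2,\ldots\}$ to permutations supported there), so by (\ref{projection_fixed}) the unitary $\mathcal{K}(g)$ maps $P_n\mathcal{H}$ onto itself; hence $P_n\mathcal{K}(g)P_n=\mathcal{K}(g)P_n$, which is nonzero whenever $P_n\neq0$, i.e.\ always at $n={\rm depth}(\mathcal{K})$. Concretely, take the permutation representation on $\ell^2\left(\left\{(i,j)\in\mathbb{N}^2:i\neq j\right\}\right)$: it is continuous, all $\mathfrak{S}(1,\infty)$-orbits on pairs are infinite so $P_0=P_1=0$, while $P_2\mathcal{H}$ is spanned by $\delta_{(1,2)}$ and $\delta_{(2,1)}$; the element $(1\;2)\notin\mathfrak{S}(2,\infty)$ swaps these two vectors, so $P_2\mathcal{K}((1\;2))P_2\neq0$. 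Thus the correct hypothesis is not $g\notin\mathfrak{S}(n,\infty)$ but $g\left(\{1,\ldots,n\}\right)\neq\{1,\ldots,n\}$, and under that hypothesis your proof is complete. Note that the paper's proof tacitly assumes the same thing --- it opens with ``let $k\leq n$ and $g(k)=m>n$'' --- so you have in fact reproduced everything the paper actually proves; the overstatement is the paper's, and it even propagates downstream, since (\ref{shifting}) is invoked for every $g\notin\mathfrak{S}(n+k,\infty)$ and can fail for window-permuting $g$ when the basis vector ${}^{i}\eta_k$ happens to be symmetric under such a permutation. So your instinct that this case is where the real difficulty lies is accurate; the resolution is not a more delicate argument but a weaker, correctly stated lemma.
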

\begin{proof}
Let $k\leq n$ and $g(k)=m>n$. Then $g= (k\;\; m)\cdot s$, where $s(m)=m$.

 Let $\mathbb{S}=\left\{ M\in\mathbb{N}:\min\left\{ M,s^{-1}(M) \right\}>n \right\}$. It is clear that $\#\mathbb{S}=\infty$.  Under this condition we have for $M\in\mathbb{S}$ \newline
$P_n\mathcal{K}(g)P_n\stackrel{\text{Lemma \ref{n_identical}}}{=}P_n\cdot\mathcal{K}((m\;\;M))\cdot\mathcal{K}((k\;\;m))\cdot\mathcal{K}(s)\cdot\mathcal{K}((m)\;\;s^{-1}(M)))\cdot P_n$\newline
$=P_n\cdot\mathcal{K}((m\;\;M))\cdot\mathcal{K}((k\;\;m))\cdot\mathcal{K}((m\;\;M))\cdot\mathcal{K}(s)\cdot P_n =P_n\cdot\mathcal{K}((k\;\;M))\cdot\mathcal{K}(s)\cdot P_n$\newline
$\stackrel{\text{Lemma \ref{app_O}}}{=}P_n\cdot O_k\cdot\mathcal{K}(s)\cdot P_n$.

But, by (\ref{projection_fixed}) and Lemma \ref{Lm10},
\begin{eqnarray*}
\mathcal{K}((k\;\;n))\cdot P_n\cdot O_k\cdot\mathcal{K}((k\;\;n))=P_n\cdot O_n=P_{n-1}\stackrel{{\rm depth}(\mathcal{K})=n}{=}0.
\end{eqnarray*}
Therefore, $P_n\mathcal{K}(g)P_n=0$.
\end{proof}
\subsection{The Proof of Theorem \ref{abelian_main_th}}
We follow the notations of the subsection \ref{outline_abelian}.
Without loss of generality, we will to assume that $\mu$ is a probability measure.
Set $n={\rm depth}(\mathcal{K})$ (see page \pageref{depth}). Recall that we denote by $P_n$ the projection of $L^2(X,\mu)$ onto subspace $L^2_n=\left\{ \eta\in L^2(X,\mu):\mathcal{K}(s)\eta=\eta  \text{ for all } s\in\mathfrak{S}(n,\infty) \right\}$. Let operator $\mathfrak{M}(f)$, where $f\in L^\infty(X,\mu)$, acts on  $\eta\in L^2(X,\mu)$ as follows
\begin{eqnarray*}
\left( \mathfrak{M}(f)\eta \right)(x)=f(x)\eta(x).
\end{eqnarray*}
Denote by $\mathcal{N}$ von Neumann algebra generated by $\mathcal{K}(\overline{\mathfrak{S}}_\infty)$ and $\mathfrak{M}(L^\infty(X,\mu))$.  Let $\mathbb{S}$ be a subset in $L^2(X,\mu)$, and let $\left[ \mathcal{N}\mathbb{S} \right]$ be the closure of $\mathcal{N}\mathbb{S}$.

Since $\mathcal{K}$ is continuous (see subsection \ref{outline_abelian}), we have
\begin{eqnarray}\label{limit_unit}
\lim\limits_{k\to\infty}P_k=I.
\end{eqnarray}
 If $I-P_l=0$ for some $l\in\mathbb{N}\cup0$, then representation $\mathcal{K}$ is trivial; i. e. $\mathcal{K}(s)=I$ for all $s\in\overline{\mathfrak{S}}_\infty$.
For this reason, we can suppose, without loss of generality, that $P_l\neq I$ for all $l\in\mathbb{N}\cup0$.

In the sequel, we will identify the measurable subsets $\mathbb{A}$ and   $\mathbb{B}$ if their  symmetric difference $\mathbb{A}\Delta\mathbb{B}$ has zero measure.

Denote by $\widetilde{P}_k$ the orthogonal projection onto subspace $\left[ \mathcal{N}L^2_k \right]$. Since $\widetilde{P}_k$ belongs to the commutant of $\mathcal{N}$, there exists the measurable $\overline{\mathfrak{S}}_\infty$-invariant subset $X_k\subset X$ such that
\begin{eqnarray*}
\widetilde{P}_k=\mathfrak{M}(\chi_{_{X_k}}), \text{ where } \chi_{_{X_k}} \text{ is the characteristic function of } X_k.
\end{eqnarray*}
Applying (\ref{limit_unit}), we obtain
\begin{eqnarray}\label{union_X_k}
X_k\subset X_{k+1} \text{ and } \bigcup\limits_k X_k=X.
\end{eqnarray}
Consider the family of the pairwise orthogonal subspaces $H_0=L^2_n$, $H_1=\left(\widetilde{P}_{n+1}-\widetilde{P}_n\right)L^2_{n+1}$, $\ldots$, $H_j=\left(\widetilde{P}_{n+j}-\widetilde{P}_{n+j-1}\right)L^2_{n+j}$, $\ldots$.
Using the definitions of $\widetilde{P}_k$ and $L^2_k$,  we conclude from (\ref{limit_unit}) that the subspaces $\left[ \mathcal{N}H_k \right]$  are pairwise orthogonal and
\begin{eqnarray}
\bigoplus\limits_k \left[ \mathcal{N}H_k \right]=L^2(X,\mu)\text{ and } P_kH_j=0 \text{ for all } k<n+j.
\end{eqnarray}
Now we fix the orthonormal basis $\left\{ \,^i\!\eta_k  \right\}_{i=1}^{{\rm dim}\, H_k}$ in $H_k$. Denote by $\,^i\!\widetilde{P}_k$ the ortho\-go\-nal projection onto the subspace $\left[ \mathcal{N} \,^i\!\eta_k \right]\subset\left[ \mathcal{N}H_k \right]$.
Then  $\,^i\!\widetilde{P}_k=\mathfrak{M}(\chi_{_{\,^i\!X_k}})$, where  $\,^i\!X_k$ is the measurable $\overline{\mathfrak{S}}_\infty$-invariant subset in $X_k$. Since $\left\{ \,^i\!\eta_k  \right\}_{i=1}^{{\rm dim}\, H_k}$ is a basis in $H_k$, we have
\begin{eqnarray}\label{X_k_decomposition}
\bigcup\limits_{i=1}^{{\rm dim}\, H_k}\,^i\!X_k=X_{n+k}\setminus X_{n+k-1}.
\end{eqnarray}
Define the family $\left\{ \,^i Q_k  \right\}_{i=1}^{{\rm dim}\, H_k}$ of the pairwise orthogonal projections as follows
\begin{eqnarray*}
\,^1 Q_k = \,^1\!\widetilde{P}_k, \,^2 Q_k =\,^2\!\widetilde{P}_k- \,^2\!\widetilde{P}_k\cdot\,^1 Q_k,\ldots,\\
\ldots, \,^l Q_k =\,^l\!\widetilde{P}_k- \,^l\!\widetilde{P}_k\cdot\sum\limits_{i=1}^{l-1} \,^{i} Q_k, \ldots
\end{eqnarray*}
From the above it follows that
\begin{eqnarray}
\,^i\!\eta_k\in\bigoplus\limits_{j=1}^i\left[ \mathcal{N}\cdot \,^j Q_k \; \,^j\!\eta_k\right] \text{ for all } i=1,2,\ldots,{{\rm dim}\, H_k}.
\end{eqnarray}
Therefore,
\begin{eqnarray}\label{orth_sum}
\left[ \mathcal{N}H_k \right]=\bigoplus\limits_{j=1}^{{\rm dim}\, H_k}\left[ \mathcal{N}\cdot \,^j Q_k \; \,^j\!\eta_k\right].
\end{eqnarray}
The same as above, $ \,^i Q_k=\mathfrak{M}\left( \chi_{_{\,^i\!A_k}} \right)$, where $\left\{\,^i\!A_k\right\}_{i=1}^{{\rm dim}\, H_k}$ is the measurable $\overline{\mathfrak{S}}_\infty$-invariant subsets   in $X_{n+k}\setminus X_{n+k-1}$ such that $\,^i\!A_k\cap \,^j\!A_k=\emptyset$ for different $i,j$. By (\ref{X_k_decomposition}),
\begin{eqnarray}\label{difference_is_union_of_A}
\sum\limits_{i=1}^{{\rm dim}\, H_k}\,^i Q_k=\widetilde{P}_{n+k}-\widetilde{P}_{n+k-1} \text{ and } \bigcup\limits_{i=1}^{{\rm dim}\, H_k}\,^i\!A_k=X_{n+k}\setminus X_{n+k-1}.
\end{eqnarray}
Denote by $\,^i\!\mathcal{K}_k$ the restriction of the representation $\mathcal{K}$ to the subspace
\begin{eqnarray}\label{property_I_Q_K}
\,^i Q_kL^2(X,\mu)=\left[ \mathcal{N} \;^i\!\xi_k\right], \text{ where }\,^i\!\xi_k=\,^i Q_k \; \,^i\!\eta_k\;\;\; (\text{see (\ref{orth_sum})}).
\end{eqnarray}
Therefore, if $\,^i Q_k \; \,^i\!\eta_k\neq 0$ then, using the definitions of $H_k$, we obtain
\begin{eqnarray}\label{depth_i_k}
{\rm depth}\,\left( \,^i\!\mathcal{K}_k \right)=n+k.
\end{eqnarray}

Let us now build the $\overline{\mathfrak{S}}_\infty$-invariant measure $\,^i\!\nu_k$ on $\,^i\!\!A_k$.

 Since $\,^i\!\xi_k=\,^i Q_k \; \,^i\!\eta_k\in H_k$, we have
\begin{eqnarray*}
\left( \,^i\!\mathcal{K}_k(s) \,^i\!\xi_k\right)(x)=\rho(s,x)\cdot \,^i\!\xi_k(xs)=\,^i\!\xi_k(x) \text{ for each } s \in\mathfrak{S}(n+k,\infty).\;\;\;
\end{eqnarray*}
Therefore,
\begin{eqnarray}\label{modul_equality}
\rho(s,x)\cdot \left|\,^i\!\xi_k(xs)\right|=\left|\,^i\!\xi_k(x)\right| \text{ for each } s \in\mathfrak{S}(n+k,\infty).\;\;\;
\end{eqnarray}
Set $\,^i\!E_k=\left\{ x\in X:\,^i\!\xi_k(x)\neq0\right\}$. It is clear that $\,^i\!E_k\subset\,^i\!A_k$. Since \newline   $\mu\left( \left\{x\in X: \rho(g,x)=0   \right\} \right)$, we conclude from (\ref{modul_equality}) that
\begin{eqnarray}\label{fix_i_E_k}
\mu\left(\,^i\!E_k\Delta\left( \,^i\!E_k\;s \right)  \right)=0 \text{ for all } s\in\mathfrak{S}(n+k,\infty).
\end{eqnarray}

Let us prove that
\begin{eqnarray}\label{shifting}
\mu\left( (\,^i\!E_k\,g)\cap\,^i\!E_k \right)=0 \text{ for each } g\notin\mathfrak{S}(n+k,\infty).
\end{eqnarray}
 Applying (\ref{depth_i_k}) and Lemma  \ref{Shift_lemma}, we obtain
\begin{eqnarray*}
0=\left< \,^i\!\mathcal{K}_k(g)\left|\,^i\!\xi_k\right|,\left|\,^i\!\xi_k\right|\right>=\int\limits_X \rho(g,x)\left|\,^i\!\xi_k(xg)\right|\left|\,^i\!\xi_k(x)\right|\,{\rm d}\,\mu.
\end{eqnarray*}
Hence, using the equality $\mu\left( \left\{x\in X: \rho(g,x)=0   \right\} \right)=0$, we get that
\begin{eqnarray*}
\int\limits_X \left|\,^i\!\xi_k(xg)\right|\left|\,^i\!\xi_k(x)\right|\,{\rm d}\,\mu=0.
\end{eqnarray*}
Therefore, $\mu$-almost everywhere
\begin{eqnarray*}
\left|\,^i\!\xi_k(xg)\right|\left|\,^i\!\xi_k(x)\right|=0.
\end{eqnarray*}
Hence follows (\ref{shifting}).

Now we define measure $\,^i\!\mu_k$ on $X$ as follows
\begin{eqnarray}\label{i_mu_k_measure}
\,^i\!\mu_k(Y)=\mu(Y\setminus \,^i\!E_k)+\int\limits_{\,^i\!E_k} \chi_{_Y}(x)\cdot\left|\,^i\!\xi_k(x)\right|^2\,{\rm d}\,\mu.
\end{eqnarray}
Hence, assuming that $Y\subset \,^i\!E_k$ and $s\in\mathfrak{S}(n+k,\infty)$, we obtain
\begin{eqnarray}\label{inv_i_mu_k}
\begin{split}
\,^i\!\mu_k(Ys)\stackrel{(\ref{fix_i_E_k})}{=}\int\limits_{\,^i\!E_k} \chi_{_{Ys}}(x)\cdot\left|\,^i\!\xi_k(x)\right|^2\,{\rm d}\,\mu\\
=\int\limits_{\,^i\!E_k} \chi_{_{Y}}(xs^{-1})\cdot\left|\,^i\!\xi_k(x)\right|^2\,{\rm d}\,\mu\\
\stackrel{(\ref{Radon_Nikodym})}{=}\int\limits_{\,^i\!E_k}\left(\rho(s,x)\right)^2 \chi_{_{Y}}(x)\cdot\left|\,^i\!\xi_k(xs)\right|^2\,{\rm d}\,\mu\\
\stackrel{(\ref{modul_equality})}{=}\int\limits_{\,^i\!E_k} \chi_{_{Y}}(x)\cdot\left|\,^i\!\xi_k(x)\right|^2\,{\rm d}\,\mu=\,^i\!\mu_k(Y).
\end{split}
\end{eqnarray}
For the construction of the $\overline{\mathfrak{S}}_\infty$-invariant measure $\,^i\!\nu_k$ on $\,^i\!\!A_k$ we consider the right coset $H\diagdown G$, where $H=\mathfrak{S}(n+k,\infty)$ and $G=\overline{\mathfrak{S}}_\infty$. Since every bijection $s\in G$ can be write as $s=h f$, where $h\in H$ and $f\in\mathfrak{S}_\infty$ is the finite permutation, then there exists a countable full set $g_1, g_2,\ldots$ of the representatives in $G$ of the cosets $H\setminus G$. Define the map $\mathfrak{r}:H\setminus G\mapsto G$ as follows: $\mathfrak{r}(z)=g_j$, if $z=Hg_j$. We will to assume that $\mathfrak{r}(H)$ is the identity $e$ of $G$.

In the sequel, we will need the next useful equality, which follows from     (\ref{property_I_Q_K}), (\ref{fix_i_E_k}) and the definition of $\,^i\!E_k$
\begin{eqnarray}\label{i_A_k_is_union}
\,^i\!A_k=\bigcup\limits_{z\in H\diagdown G} \,^i\!E_k\;\mathfrak{r}(z).
\end{eqnarray}

For completeness, we will give below the standard algorithm of the continuation of the finite $\mathfrak{S}(n+k,\infty)$-invariant measure $\,^i\!\mu_k$ on $\,^i\!E_k$  to the $\sigma$-finite $\overline{\mathfrak{S}}_\infty$-invariant measure on $\,^i\!\!A_k$.

Take the measurable subset $Y\subset \,^i\!\!A_k$ and define its measure $\,^i\!\nu_k(Y)$ as follows
\begin{eqnarray}\label{def_i_nu_k}
\,^i\!\nu_k(Y)=\sum\limits_{z\in H\diagdown G}\,^i\!\mu_k\left(\left(Y\cap \left(\,^i\!E_k\;\mathfrak{r}(z) \right) \right)(\mathfrak{r}(z))^{-1} \right)
\end{eqnarray}
 Let us prove that
\begin{eqnarray}\label{full_inf}
\,^i\!\nu_k(Y)=\,^i\!\nu_k(Yg) \text{ for all }\;\; g\in G \text{ and } Y\subset  \,^i\!A_k.
\end{eqnarray}
For this we notice that
\begin{eqnarray*}
&\,^i\!\nu_k(Yg)=\sum\limits_{z\in H\diagdown G}\,^i\!\mu_k\left(\left((Yg)\cap \left(\,^i\!E_k\mathfrak{r}(z) \right) \right)(\mathfrak{r}(z))^{-1} \right)\\
&=\sum\limits_{z\in H\diagdown G}\,^i\!\mu_k\left(\left(Y\cap \left(\,^i\!E_k\mathfrak{r}(z) g^{-1}\right) \right)g(\mathfrak{r}(z))^{-1} \right)\\
&\stackrel{(\ref{fix_i_E_k})}{=}\sum\limits_{z\in H\diagdown G}\,^i\!\mu_k\left(\left(Y\cap \left(\,^i\!E_k\mathfrak{r}(zg^{-1}) \right) \right)g(\mathfrak{r}(z))^{-1} \right)\\
&=\sum\limits_{z\in H\diagdown G}\,^i\!\mu_k\left(\left(Y\cap \left(\,^i\!E_k\mathfrak{r}(zg^{-1}) \right) \right)\left(\mathfrak{r}(zg^{-1}) \right)^{-1}\cdot \mathfrak{r}(zg^{-1})g(\mathfrak{r}(z))^{-1} \right)\\
&=\sum\limits_{z\in H\diagdown G}\,^i\!\mu_k\left(\left(Y\cap \left(\,^i\!E_k\mathfrak{r}(z) \right) \right)\left(\mathfrak{r}(z) \right)^{-1}\cdot \mathfrak{r}(z)g(\mathfrak{r}(zg))^{-1} \right),
\end{eqnarray*}
where $\mathfrak{r}(z)g(\mathfrak{r}(zg))^{-1} \in H=\mathfrak{S}(n+k,\infty)$.
Hence, using (\ref{inv_i_mu_k}), and (\ref{def_i_nu_k}), we obtain
\begin{eqnarray*}
\,^i\!\nu_k(Yg)=\sum\limits_{z\in H\diagdown G}\,^i\!\mu_k\left(\left(Y\cap \left(\,^i\!E_k\mathfrak{r}(z) \right) \right)\left(\mathfrak{r}(z) \right)^{-1} \right)=\,^i\!\nu_k(Y).
\end{eqnarray*}
The equality (\ref{full_inf}) is proved.

Now we fix $Y\subset \,^i\!A_k$ such that $\,^i\!\nu_k(Y)=0$ and will prove that
$
\mu(Y)=0$.

Indeed, applying (\ref{def_i_nu_k}), we have
\begin{eqnarray*}
\,^i\!\mu_k\left(\left(Y\cap \left(\,^i\!E_k\;\mathfrak{r}(z) \right) \right)(\mathfrak{r}(z))^{-1} \right)=0 \text{ for all } z\in H\setminus G.
\end{eqnarray*}
It follows from (\ref{i_mu_k_measure}) that $\mu\left(\left(Y\cap \left(\,^i\!E_k\;\mathfrak{r}(z) \right) \right)(\mathfrak{r}(z))^{-1} \right)=0$ for all $z\in H\setminus G$.
Therefore, $\mu\left(\left(Y\cap \left(\,^i\!E_k\;\mathfrak{r}(z) \right) \right)\right)=0$ for all $z$. Hence, using (\ref{i_A_k_is_union}), we obtain that $\mu(Y)=0$.

Thus the restrictions of the measures $\mu$ and $\,^i\!\nu_k$ onto $\,^i\!A_k$ are equivalent. Hence, applying (\ref{difference_is_union_of_A}) and  (\ref{union_X_k}), we get that $\mu$ is equivalent to the $\overline{\mathfrak{S}}_\infty$-invariant measure $\nu=\sum\limits_{i,k}\,^i\!\nu_k$.
 Theorem \ref{abelian_main_th} is proved.
{}
B.Verkin Institute for Low Temperature Physics and Engineering\\n.nessonov@gmail.com

\begin{thebibliography}{}
\bibitem{Kech_Rosendal}
 Kechris A.S. and  Rosendal C., Turbulence, amalgamation, and generic automorphisms of homogeneous
structures, Proc. London Math. Soc., 94 (2007) no.2, 302вЂ“350.
\bibitem{Lieberman}
 Lieberman A., The structure of certain unitary representations of infinite sym
metric groups, Trans. Amer. Math. Soc, 164 (1972), 189-198
\bibitem{Olsh1}
Olshanski G., Unitary representations of (G,K)-pairs connected with the infinite symmetric group S(infty). Leningrad [currently St.Petersburg] Mathematical Journal 1, no. 4 (1990), 983--1014. [Translation from Algebra i Analiz, 1:4, 1989]
\bibitem{Olsh2}
Olshanski G., On semigroups related to infinite-dimensional groups. In: Topics in representation theory. Advances in Soviet Mathematics., vol. 2. American Mathematical Society Providence, R.I., 1991, 67-101.

\end{thebibliography}
\end{document}